\numberwithin{equation}{section}
\newtheorem{theorem}{Theorem}[section]
\newtheorem{lemma}[theorem]{Lemma}
\numberwithin{equation}{theorem}
\title{Phase Transition in the One-bit Johnson-Lindenstrauss Lemma} 
\author{Amadou Bah} 
\author{Bryson Kagy} 
\author{Emily Smith}  
\thanks{Research conducted at during an REU sponsored by an NSF MCTP-Grant to the Georgia Institute of Technology} 
\begin{document}
\begin{abstract}
The Johnson-Lindenstrauss Lemma (J-L Lemma) is a cornerstone of dimension reduction techniques.
 We study it in the one-bit context,  namely we consider the unit sphere $ \mathbb S ^{N-1}$, with 
 normalized geodesic metric,  and map a finite set  $ \mathbf{X} \subset \mathbb{S}^{N-1}$ into the Hamming cube $\mathbb{H}_m = \{0,1\}^m$, with normalized Hamming metric. 
We find that for $ 0< \delta <1$,  and $m>\frac{\ln n}{2\delta^2}$ there is a $\delta$-RIP  from $\mathbf{X}$ into $\mathbb{H}_m$.  This is surprising as the value of $ m$ is virtually identical to best known bound  linear J-L Lemma.  
In both the linear and one-bit case, the maps are randomly constructed.  
 We show that the probability of $B_m$  being a $\delta$-RIP  satisfies a phase transition. 
 It passes from probability of  nearly zero to nearly one with a very small change in $m$.
 Our proof relies on delicate properties of Bernoulli random variables.
\end{abstract}

\maketitle 
\section{Introduction}

Compressive sensing was first introduced as a practical application of signal processing and has since taken off and proven to be very useful for many aspects of modern life such as MRI scanning, cell phone imaging, electron microscopy, and many more \cite{lustig2007sparse, fornasier2011compressive,binev2012compressed}. It has been previously shown by Johnson-Lindenstrauss \cite{johnson1984extensions}, that given a very high dimensional data set in $\mathbb{R}^N$, it is possible, with little sacrifice, to map vectors from a subset of this $N$-dimensional space, to a much lower, $m$-dimensional space.   Recently, Alon and Klartag  \cite{2016arXiv161000239A} studied the minimum number of bits required in order to maintain the Euclidean distance between data points. This differs from our results through the fact that we maintain the geodesic distance between points.  The non-linear geodesic metric is basic to our considerations.

Dasgupta-Gupta \cite{dasgupta1999elementary} provide the best quantitative bounds in the J-L Lemma. 
  For any $0<  \delta <1$, any integer $ n$  let 
$$m\geq 4\frac{\ln n}{\frac{\delta ^2}{2}-\frac{\delta ^3}{3}},$$  
then for any set of $ n$  points in $\mathbb{R}^N$
there exist a map $f:\mathbb{R}^N\rightarrow\mathbb{R}^m$ such that for all $x, y \in \mathbb{R}^N$ we have:
$$(1-\delta)\left\|x-y\right\|^2\leq \left\|f(x)-f(y)\right\|^2\leq (1+\delta)\left\|x-y\right\|^2$$
For comparison below, we remark that $ \sqrt {1 \pm \delta } \simeq 1 \pm \delta /2$.   
 
We study the one bit context.  Consider the unit sphere $ \mathbb S ^{N-1}$ with the normalized geodesic metric. We map finite $ \mathbf X \subset \mathbb S ^{N-1}$ into the $ m$ dimensional   Hamming cube $ \mathbb H _m$ , with normalized 
Hamming metric.  A main result is that for $ 0< \delta < 1$, and integer $ n$, 
let $ m > 2 \frac {\ln n} {\delta ^2 }$.  For any set $ \mathbf X\subset \mathbb S ^{N-1}$ of cardinality $ n$, 
there is a $ \delta $-RIP from $ \mathbf X$ into $ \mathbb H_m$.  
The counter-intuitive fact is that our bound for $ m$ is \emph{virtually identical to the one that holds 
for the linear J-L Lemma. } 

We prove the One Bit J-L Lemma in \S \ref{s:JL}.  The simplier property of our random one-bit map being one to one is studied in \S \ref{s:11}.  
For special choices of $ \mathbf X$ , we make a finer analysis of the one-to-one and RIP properties.  
They satisfy phase transitions that depend only weakly on the number of points we are mapping, 
see \S \ref{s:PhaseRIP} and \S \ref{s:Phase11}. 
Some background information is recalled in \S \ref{s:background}.

\section{Background} \label{s:background} 
We formalize below several definitions we will use throughout the paper.
\subsection* {Hamming Cube}
 $\mathbb{H}_m =\{0,1\}^m$ for all $x\in\mathbb{H}_m$ $x=x_1x_2...x_n$ where $x_i\in \{0,1\}$. For all $x,y\in\mathbb{H}_m$ we have the normalized metric 
 $$d_{\mathbb{H}_m}(x,y)=\frac{1}{m}\#\{i:x_i\neq y_i\} . $$ 

 \subsection* {Random $m$-dimensional One-Bit Map}
Given $\{\theta_j \;:\; 1\leq j \leq m\}$ be iid uniformly distributed  random vectors in $\mathbb{S}^{N-1}$. 
Define a map $ B _m \;:\; \mathbb S ^{N-1} \to \mathbb H _m$ by  
$\textbf{B}_m x = \{ \textup{sgn}(x\cdot\theta_j) \} _{j=1} ^{m}$. 
Observe that 
\begin{equation}\label{e:distance}
d_{\mathbb{H}_m}(\textbf{B}_m x,  \textbf{B}_m y)
=\frac{1}{m}\sum_{i=1}^m\frac{|\textup{sgn}(x\cdot\theta_i)-\textup{sgn}(y\cdot\theta_i)|}{2}.
\end{equation}

\subsection*{Geodesic Distance} Fix $x$, $y$ on $\mathbb{S}^{N-1}$. The geodesic distance $d_{geo}(x,y),$  is the shortest distance between the points $x$ and $y$ on the surface. 
 This is given by 
 $$d_{geo}(x,y)=\frac{\cos^{-1}(x\cdot y)}{\pi}.$$ 
 Antipodal points are normalized to be distance one apart. Geodesic distance has this probabilistic interpretation: Let ${\textup{Wedge}}_{xy}=\{\theta\in\mathbb{S}^{N-1}:\textup{sgn}(x\cdot\theta)\neq\textup{sgn}(y\cdot\theta)\}.$ These are the $\theta$ which distinguish between $x$ and $y$ under the one-bit map. Selecting $\theta\in\mathbb{S}^{N-1}$ at random, the probability of being in ${\textup{Wedge}}_{xy}$ is $d_{geo}(x,y).$ This is an instance of the Crofton formula.

 For the distance in \eqref{e:distance}, we then have 
 \begin{gather}\label{e:Distance}
d_{\mathbb{H}_m}(\textbf{B}_m x,  \textbf{B}_m y)
=\frac{1}{m}\sum_{i=1}^m \mathbf 1_{{\textup{Wedge}}_{xy}} (\theta _j) . 
\end{gather}
The right hand side is an average of Bernoulli rvs.  In particular, the difference between the Hamming and 
geodesic metrics is 
\begin{equation} \label{e:difference}
d_{\mathbb{H}_m}(\textbf{B}_m x,  \textbf{B}_m y) - d _{\textup{geo}} (x,y)
=\frac{1}{m}\sum_{i=1}^m \mathbf 1_{{\textup{Wedge}}_{xy}} (\theta _j) 
- d _{\textup{geo}} (x,y) . 
\end{equation}
Standard deviation inequalities for Bernoulli rvs apply to the right hand side above. 
 
 \subsection*{The Restricted Isometry Property} $\textbf{B}_m : \textbf{X} \to \mathbb H _m$ has the $\delta$-RIP if for all pairs $x,y\in\textbf{X}$: $$|d_{\mathbb{H}_m}(\textbf{B}_m x,\textbf{B}_m y)-d_{geo}(x,y)|\leq\delta.$$

\subsection* {Positively Associated Stein-Chen Approximation} For random variables to be positively associated, their covariance is positive, meaning they increase or decrease together.

$$d_{TV}(W, \textup{Poi}(\lambda))\leq min\left(1,\frac{1}{\lambda}\right)\left(\textup{Var}(W)-\lambda+2\sum_{i\in I}P_i^2\right)$$ where $W$ is a sum of positively associated Bernoullis with parameter $P_i$, $\lambda$ is $\mathbb{E}(W)$, and $d_{TV}$ is total variation distance.
\subsection* {General Form of Stein-Chen Approximation}\cite{arratia1990poisson}
 $$d_{TV}(W, \textup{Poi}(\mathbb{\lambda}))\leq\min\left(1,\dfrac{1}{\lambda}\right)\sum_{i\in I}\left(P_i^2+\sum_{j\in\mathscr{N}_i}\left(P_iP_j+\mathbb{E}(X_iX_j)\right)\right)$$
where $X_i$ are Bernoullis with parameter $P_i,$ $W$ is a sum of all $X_i$, $\lambda=\mathbb{E}(W)$, $\mathscr{N}_i$ is the set of random variables that depend on $X_i$, and $d_{TV}$ is total variation distance.

\section{A One-to-One Mapping From the Unit Sphere to the Hamming Cube} \label{s:11}
We start with an analysis of a simpler property of $\textbf{B}_m$ being one-to-one.


\begin{theorem}
Let $0<\delta$, $\epsilon<1$, and let $\textbf{X}\subset\mathbb{S}^{N-1}$ be a subset of n points with $d_{geo}(x,y)>1-\delta$, where $x\neq y\in\bf{X}.$ The random $m$-dimensional one-bit map $\textbf{B}_m$ : $\textbf{X}\to\mathbb{H}_m$ will be one-to-one with probability at least $1-\epsilon$ provided that $$m\geq \dfrac{\ln\frac{n^2}{2\epsilon}}{\ln\frac{1}{\delta}}.$$
In the special case when the points $x$ and $y$ are pairwise orthogonal, $d_{geo}(x,y) = \frac{1}{2}$, $$m\geq 2\log_2n+\log_2\frac{1}{2\epsilon}.$$
\end{theorem}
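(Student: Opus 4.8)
The plan is to reduce the one-to-one property to a union bound over pairwise collision events. The map $\textbf{B}_m$ fails to be injective on $\textbf{X}$ precisely when there exist distinct $x,y\in\textbf{X}$ with $\textbf{B}_m x = \textbf{B}_m y$, and by the definition of the map this equality holds if and only if $\textup{sgn}(x\cdot\theta_j)=\textup{sgn}(y\cdot\theta_j)$ for every $j$, that is, no direction $\theta_j$ lands in ${\textup{Wedge}}_{xy}$.

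First I would fix a pair $x\neq y$ and compute the collision probability. Since the $\theta_j$ are iid and, by the Crofton formula recalled in \S\ref{s:background}, each $\theta_j$ lies in ${\textup{Wedge}}_{xy}$ with probability $d_{geo}(x,y)$, the indicators $\mathbf 1_{{\textup{Wedge}}_{xy}}(\theta_j)$ are iid Bernoulli with parameter $d_{geo}(x,y)$. Hence
\begin{equation}
\mathbb{P}\bigl(\textbf{B}_m x = \textbf{B}_m y\bigr)=\bigl(1-d_{geo}(x,y)\bigr)^m .
\end{equation}
Invoking the hypothesis $d_{geo}(x,y)>1-\delta$ gives the uniform bound $\mathbb{P}(\textbf{B}_m x = \textbf{B}_m y)<\delta^m$ for every distinct pair.

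Next I would sum over all pairs. There are $\binom{n}{2}<\frac{n^2}{2}$ distinct pairs, so a union bound yields
\begin{equation}
\mathbb{P}\bigl(\textbf{B}_m\ \text{not one-to-one}\bigr)\leq\binom{n}{2}\delta^m<\frac{n^2}{2}\delta^m .
\end{equation}
Requiring the right-hand side to be at most $\epsilon$ and solving for $m$ — taking logarithms and dividing by $\ln\delta$, which is negative since $0<\delta<1$, so that the inequality reverses — produces exactly $m\geq\frac{\ln\frac{n^2}{2\epsilon}}{\ln\frac{1}{\delta}}$. For the orthogonal special case I would substitute $d_{geo}(x,y)=\tfrac12$, so the per-pair collision probability is $2^{-m}$; repeating the union bound and converting to base-$2$ logarithms gives $m\geq 2\log_2 n+\log_2\frac{1}{2\epsilon}$.

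Because every step is elementary, I do not anticipate a serious obstacle; the only points demanding care are the direction of the inequality when dividing by $\ln\delta<0$, and the bookkeeping that turns $\binom{n}{2}<\frac{n^2}{2}$ into the factor $\frac{n^2}{2\epsilon}$ appearing inside the logarithm.
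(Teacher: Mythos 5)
Your proposal is correct and follows essentially the same route as the paper: a union bound over the $\binom{n}{2}$ pairs, with each collision probability bounded by $\delta^m$ via the independence of the coordinates and the Crofton-formula identity $\mathbb{P}(\textup{sgn}(x\cdot\theta_j)=\textup{sgn}(y\cdot\theta_j))=1-d_{geo}(x,y)<\delta$. The only difference is that you spell out the per-coordinate probability computation slightly more explicitly than the paper does.
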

By the pigeonhole principle, m must be at least $\log_2n.$ Our result shows that if $m>2\log_2n,$ then the random $m$-dimensional one-bit map is one-to-one with high probability. 
\begin{proof}By the union bound, we know that:$$\mathbb{P}(\textbf{B}_m \textrm{ is not one-to-one})\leq \sum_{x\backsim y}\mathbb{P}(\textbf{B}_mx=\textbf{B}_m y \textrm{ with } x\neq y) \leq \binom{n}{2}\delta^{m}.$$ In this expression, $\displaystyle\sum_{x\backsim y}$ means the sum over all unordered pairs $x\backsim y$ where $x\neq y$. Above, there are $\binom{n}{2}$ pairs $x\backsim y\in\bf{X}.$ The i\textsuperscript{th} coordinates of $\textbf{B}_mx$ and $\textbf{B}_m y$ are equal with probability  at most $\delta$. The coordinates are independent, hence the inequality above. We require $\binom{n}{2}\cdot\delta^m \leq\epsilon$, which is true if
$$m\geq \dfrac{\ln\frac{n^2}{2\epsilon}}{\ln\frac{1}{\delta}}.$$
This condition is sufficient for $\textbf{B}_m$ to be one-to-one with probability $1-\epsilon$.
In the special case when $\bf{X}$ consists of pairwise orthogonal vectors, $d_{geo}(x,y) = \frac{1}{2}$, the bound is
$$m \geq 2\log_2{n}+\log_2\frac{1}{2\epsilon}.$$
\end{proof}

\section{A Phase Transition in One-to-One Property} \label{s:Phase11}


For a special class of $ \textbf{X}$, we analyze the property of $ \textbf{B}_m$ 
being one-to-one. We show that the probability passes through a phase transition. And the width of the phase transition is essentially independent of the cardinality of $ \textbf{X}$.
\begin{theorem}
 Fix $0<\epsilon_2<\epsilon_1<1$. Let $\bf{X}$ be $n$ pairwise orthogonal vectors in $\mathbb{S}^{N-1}$, and let $P_{\textrm{1-1}}(m)$ be the probability that $\textbf{B}_m$ is one-to-one. Then for $n\geq10$, $1-\epsilon_1<P_{\textrm{1-1}}(m)$ when: 
$$\log_2\frac{n(n-1)}{2\ln\frac{1}{1-\frac{\epsilon_1}{1.01}}}\leq m$$ and $P_{\textrm{1-1}}(m)<1-\epsilon_2$ when
$$m\leq\log_2\frac{n(n-1)}{2\ln\frac{1}{1-\frac{\epsilon_2}{0.99}}}.$$
Additionally, the phase transition is bounded as follows: $$P_{\textrm{1-1}}(m)=\mathbb{P}( \textbf{B}_m \textrm{ is one-to-one})\in[e^{-\frac{\binom{n}{2}}{2^{m}}}-\binom{n}{2}\cdot 2^{-2m},e^{-\frac{\binom{n}{2}}{2^{m}}}+\binom{n}{2}\cdot 2^{-2m}].$$
\end{theorem}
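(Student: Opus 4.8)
The plan is to reduce the one-to-one property to a Poisson approximation and apply the positively associated Stein--Chen bound recorded in \S\ref{s:background}. Let $W=\sum_{\{x,y\}}X_{xy}$ count the colliding pairs, where the sum is over unordered pairs and $X_{xy}=\mathbf 1[\textbf{B}_m x=\textbf{B}_m y]$ indicates that $\{x,y\}$ is identified by the map. Then $\{\textbf{B}_m \text{ is one-to-one}\}=\{W=0\}$, so $P_{\textrm{1-1}}(m)=\mathbb P(W=0)$. Because $\textbf{X}$ is pairwise orthogonal, $d_{geo}(x,y)=\tfrac12$ for every pair, each coordinate of $\textbf{B}_m x$ and $\textbf{B}_m y$ agrees with probability $\tfrac12$, and the coordinates are independent; hence each $X_{xy}$ is Bernoulli with parameter $P_{xy}=2^{-m}$ and $\lambda:=\mathbb E(W)=\binom n2 2^{-m}$. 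The goal of the sandwich statement is exactly to show $\mathbb P(W=0)$ lies within $\binom n2 2^{-2m}$ of $\mathbb P(\textup{Poi}(\lambda)=0)=e^{-\lambda}$.

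The decisive step is a variance computation, and it rests on the fact that for orthonormal directions the one-bit signs are genuinely independent. Writing a uniform $\theta\in\mathbb S^{N-1}$ as $g/\lVert g\rVert$ with $g$ standard Gaussian, the inner products $u\cdot g$ for orthonormal $u$ are independent $N(0,1)$ variables, so the signs $\textup{sgn}(u\cdot\theta)$ are independent fair coins, and across the independent $\theta_1,\dots,\theta_m$ everything remains independent. From this I would deduce that the collision indicators are pairwise uncorrelated: for a disjoint pair this is immediate, while for two pairs sharing a vertex, say $\{x,y\}$ and $\{x,z\}$, each coordinate contributes $\mathbb P(s_x=s_y=s_z)=\tfrac14=\tfrac12\cdot\tfrac12$, so $\mathbb E(X_{xy}X_{xz})=4^{-m}=P_{xy}P_{xz}$ and $\textup{Cov}(X_{xy},X_{xz})=0$. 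Consequently all covariances vanish and $\textup{Var}(W)=\sum P_{xy}(1-P_{xy})=\lambda-\binom n2 2^{-2m}$. Feeding this into the positively associated Stein--Chen inequality, the error quantity collapses: $\textup{Var}(W)-\lambda+2\sum P_{xy}^2=\binom n2 2^{-2m}$. Since $\min(1,1/\lambda)\le 1$ and $|\mathbb P(W=0)-e^{-\lambda}|\le d_{TV}(W,\textup{Poi}(\lambda))$, this yields the claimed interval $[e^{-\binom n2/2^m}-\binom n2 2^{-2m},\,e^{-\binom n2/2^m}+\binom n2 2^{-2m}]$.

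Finally I would derive the two threshold statements from this sandwich. Each hypothesis on $m$ is just a restatement of a bound on $\lambda$: the condition $m\geq\log_2\bigl(\tfrac{n(n-1)}{2}/\ln\tfrac{1}{1-\epsilon_1/1.01}\bigr)$ is equivalent to $\lambda\le\ln\tfrac{1}{1-\epsilon_1/1.01}$, i.e. $e^{-\lambda}\ge 1-\epsilon_1/1.01$, and symmetrically for the $\epsilon_2$ bound. Combining with the sandwich gives $P_{\textrm{1-1}}(m)\ge(1-\epsilon_1/1.01)-\binom n2 2^{-2m}$, and it remains to verify $\binom n2 2^{-2m}\le\epsilon_1(1-\tfrac{1}{1.01})$ so that $P_{\textrm{1-1}}(m)>1-\epsilon_1$; the lower threshold is handled the same way with the sign of the error reversed. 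I expect the main obstacle to be precisely this last bookkeeping: the error $\binom n2 2^{-2m}=\lambda^2/\binom n2$ must be absorbed into the slack created by the constants $1.01$ and $0.99$, and it is here that $n\ge 10$ (hence $\binom n2\ge 45$) enters, forcing the error to be small relative to that slack. The variance identity of the second paragraph is the genuinely delicate point; once the covariances are known to vanish, the rest is calibration of constants.
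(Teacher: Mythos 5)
Your proposal is correct and follows essentially the same route as the paper: the birthday-problem decomposition into collision indicators, the verification that the indicators are pairwise uncorrelated (the shared-vertex case reducing to $\mathbb{P}(\textbf{B}_m x=\textbf{B}_m y=\textbf{B}_m s)=p^2$), the resulting variance identity $\textup{Var}(W)=\lambda-\binom{n}{2}2^{-2m}$ fed into the positively associated Stein--Chen bound, and the final calibration of $m$ against the $1.01$ and $0.99$ constants with $n\geq 10$ absorbing the error term. Your Gaussian-representation justification for the independence of the signs across orthonormal directions is in fact spelled out more explicitly than in the paper's own argument at this point.
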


\noindent We will analyze this from the perspective of the birthday problem. To do this, we will count all pairs of points that $\textbf{B}_m$ map to the same point in the Hamming cube. Namely for $x,y\in\textbf{X}$, let
$$ \hspace{.1cm} W_{x\backsim y}= \begin{cases} 
1,& \textbf{B}_mx = \textbf{B}_my\\
0,& \textrm{otherwise.}
\end{cases}$$
All $W_{x\backsim y}$ are i.i.d with probability $p=\frac{1}{2^m}$ and $W = \displaystyle\sum_{x\backsim y}W_{x\backsim y}$ is a sum of positively associated Bernoulli random variables. 
By the Stein-Chen approximation, $W$ is close to a Poisson distribution, in total variation, denoted $ d _{TV}$ below.  We make this precise below:  
$$d_{TV}(W, \textrm{Poi}(\mathbb{\lambda}))\leq\eta$$ where 
\begin{align}
\eta=\min\left(1,\dfrac{1}{\lambda}\right) [\textrm{Var}(W)-\lambda+2\sum_{x\backsim y}p^{2}].\label{eq:3}
\end{align}
\begin{lemma}
We claim $$\textrm{Var}(W) = \lambda -\displaystyle\sum_{x\backsim y}p^2.$$
\end{lemma}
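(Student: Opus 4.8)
The plan is to expand $\textrm{Var}(W)$ into a sum of individual variances plus pairwise covariances, and then to show that every covariance term vanishes, so that the variance is exactly the sum of the individual variances. First I would write
$$\textrm{Var}(W)=\sum_{x\backsim y}\textrm{Var}(W_{x\backsim y})+\sum_{(x\backsim y)\neq(z\backsim w)}\textrm{Cov}(W_{x\backsim y},W_{z\backsim w}).$$
Since each $W_{x\backsim y}$ is Bernoulli with parameter $p=2^{-m}$, its variance is $p-p^2$, and summing over the $\binom{n}{2}$ pairs gives $\binom{n}{2}(p-p^2)=\lambda-\sum_{x\backsim y}p^2$, which is exactly the claimed right-hand side. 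Thus the whole lemma reduces to proving that the double sum of covariances is zero, i.e.\ that the indicators $W_{x\backsim y}$ are \emph{pairwise independent}.

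To see the independence I would pass to the Gaussian model for the directions: write $\theta_j=g_j/\|g_j\|$ with $g_j$ a standard Gaussian vector, so that $\textup{sgn}(x\cdot\theta_j)=\textup{sgn}(x\cdot g_j)$. When the points of $\textbf{X}$ are pairwise orthogonal, the inner products $x\cdot g_j$ indexed by distinct $x$ are jointly Gaussian with identity covariance, hence independent mean-zero Gaussians; their signs are therefore independent symmetric $\pm1$ random variables within each coordinate $j$, and of course independent across the $m$ coordinates.

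The decisive computation, which I expect to be the main obstacle, is evaluating $\mathbb{E}[W_{x\backsim y}W_{z\backsim w}]$ for two distinct pairs. I would split into two cases. If the pairs share a point, say $\{x\backsim y\}$ and $\{x\backsim z\}$, then coordinatewise the joint event forces $\textup{sgn}(x\cdot\theta_j)=\textup{sgn}(y\cdot\theta_j)=\textup{sgn}(z\cdot\theta_j)$; by independence of the three signs this happens with probability $\tfrac14$ per coordinate, so $\mathbb{E}[W_{x\backsim y}W_{x\backsim z}]=4^{-m}=p^2=\mathbb{E}[W_{x\backsim y}]\,\mathbb{E}[W_{x\backsim z}]$ and the covariance is zero. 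If all four points are distinct, the two coordinatewise events depend on disjoint collections of independent signs and factor immediately, again giving $p^2$ and zero covariance. The only real subtlety is the shared-point case, which looks dependent at first sight but is rendered independent by the orthonormality of $\textbf{X}$; this is precisely the mechanism that makes the covariances vanish and yields $\textrm{Var}(W)=\lambda-\sum_{x\backsim y}p^2$.
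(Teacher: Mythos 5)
Your proof is correct and follows essentially the same route as the paper: reduce the identity to pairwise independence of the indicators $W_{x\backsim y}$, verify it by computing $\mathbb{E}[W_{x\backsim y}W_{r\backsim s}]=p^2$ in the shared-point case (three orthogonal points all mapping to the same Hamming point), and then sum the Bernoulli variances. Your Gaussian-rotation justification for the independence of the signs is a welcome extra step that the paper leaves implicit.
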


\noindent\begin{proof}

If the the $W_{x\backsim y}$ have the property $\mathbb{E}[W_{x\backsim y}W_{r\backsim s}]=\mathbb{E}[W_{x\backsim y}]\mathbb{E}[W_{r\backsim s}]$ then the $W_{x\backsim y}$ are pairwise independent. Assuming this, variance adds, and $Var(W)$ can be calculated: 

\begin{eqnarray*} 
\textrm{Var}(W)&=&\sum_{x\backsim y}\textrm{Var}(W_{x\backsim y})\\
&=&\binom{n}{2} p(1-p)\\
&=& \lambda -\displaystyle\sum_{x\backsim y}p^2.\\
\end{eqnarray*}
\noindent It remains to prove that $\mathbb{E}[W_{x\backsim y}W_{r\backsim s}]=\mathbb{E}[W_{x\backsim y}]\mathbb{E}[W_{r\backsim s}]$. It will be sufficient to show that $\mathbb{E}[W_{x\backsim y}W_{r\backsim s}]=p^2$. The only non-trivial case is when $x\backsim y$ and $r\backsim s$ share exactly one point. We will write this as $\mathbb{E}[W_{x\backsim y}W_{y\backsim s}]$ where y is the shared point.
$\mathbb{E}[W_{x\backsim y}W_{y\backsim s}]$ is equal to 
$\mathbb{P}(\textbf{B}_mx=\textbf{B}_my=\textbf{B}_ms)$ which means we have three distinct points on the sphere mapping to the same point on the Hamming cube. Thus $\mathbb{P}(\textbf{B}_mx=\textbf{B}_my=\textbf{B}_ms)=p^2$ giving us that $\mathbb{E}[W_{x\backsim y}W_{r\backsim s}]=p^2$.





\end{proof}

\begin{lemma} We claim that $\eta=\binom{n}{2}\cdot2^{-2m}$ where $\eta$ is $\eqref{eq:3}$ 

\end{lemma}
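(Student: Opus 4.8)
The plan is to collapse the expression \eqref{eq:3} for $\eta$ to a single elementary sum by substituting the variance identity from the preceding lemma. First I would insert $\textrm{Var}(W) = \lambda - \sum_{x \backsim y} p^2$ into the bracketed factor of \eqref{eq:3}. The two occurrences of $\lambda$ cancel, and $-\sum_{x \backsim y} p^2 + 2\sum_{x \backsim y} p^2$ collapses to $\sum_{x \backsim y} p^2$, so the bracket reduces to exactly $\sum_{x \backsim y} p^2$. This is the crux of the computation and needs only the bookkeeping of canceling like terms.

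Next I would evaluate the surviving sum. The pairs $x \backsim y$ are indexed by the $\binom{n}{2}$ unordered pairs among the $n$ points, and each $W_{x \backsim y}$ is Bernoulli with $p = 2^{-m}$, so $\sum_{x \backsim y} p^2 = \binom{n}{2} p^2 = \binom{n}{2} 2^{-2m}$. This already matches the claimed value up to the leading factor $\min(1, 1/\lambda)$.

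The remaining, and only genuinely delicate, step is to dispose of $\min(1, 1/\lambda)$. Here $\lambda = \mathbb{E}(W) = \binom{n}{2} p = \binom{n}{2} 2^{-m}$, so $\min(1, 1/\lambda) = 1$ exactly when $\lambda \le 1$, i.e.\ when $2^m \ge \binom{n}{2}$, equivalently $m \ge \log_2 \binom{n}{2}$. In the regime driving the one-to-one phase transition of the preceding theorem --- where $m$ sits at or above roughly $\log_2 \binom{n}{2}$ so that collisions become rare --- this holds and the prefactor is identically $1$, yielding $\eta = \binom{n}{2} 2^{-2m}$. I expect this to be the main obstacle to a fully clean statement: below the threshold ($\lambda > 1$) the factor is $1/\lambda$ rather than $1$, the stated equality fails, and only the bound $\eta \le \binom{n}{2} 2^{-2m}$ survives. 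Since the subsequent total-variation estimate needs only $d_{TV}(W, \textrm{Poi}(\lambda)) \le \eta \le \binom{n}{2} 2^{-2m}$, I would in any event record $\min(1, 1/\lambda) \le 1$ as the inequality actually used in the phase-transition bounds.
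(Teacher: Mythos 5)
Your proposal matches the paper's proof: both substitute $\textrm{Var}(W) = \lambda - \sum_{x\backsim y} p^2$ into \eqref{eq:3}, cancel the $\lambda$ terms, and evaluate the surviving sum as $\sum_{x\backsim y} p^2 = \binom{n}{2}2^{-2m}$. You are in fact more careful than the paper, which silently treats $\min\left(1,\frac{1}{\lambda}\right)$ as $1$; your observation that the claimed equality requires $\lambda \leq 1$ (i.e.\ $2^m \geq \binom{n}{2}$), and that otherwise only the inequality $\eta \leq \binom{n}{2}2^{-2m}$ survives --- which is all the subsequent total-variation bound uses --- is a point the paper should have made explicit.
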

Using this, we can bound $1-\mathbb{P}(W\geq1)$ in the window $$1-\mathbb{P}(W\geq1)\in[e^{-\frac{\binom{n}{2}}{2^{m}}}-\eta,e^{-\frac{\binom{n}{2}}{2^{m}}}+\eta].$$

\noindent\begin{proof}
We can find an expression for $\eta$ using the variance of $W$:
\begin{eqnarray*}
\eta&=&\min\left(1,\dfrac{1}{\lambda}\right) \left[\textrm{Var}(W)-\lambda+2\sum_{x\backsim y}p^{2}\right]\\
&=&\binom{n}{2}\cdot2^{-2m}.
\end{eqnarray*}
We can bound $1-\mathbb{P}(W\geq1)$ which is equal to $P_{\textrm{1-1}}(m)$:
$$|\mathbb{P}(W\geq1)-\mathbb{P}(Poi(\lambda)\geq1)|\leq\eta$$
$$1-\mathbb{P}(W\geq1)\in[e^{-\frac{\binom{n}{2}}{2^{m}}}-\eta,e^{-\frac{\binom{n}{2}}{2^{m}}}+\eta].$$ 
\end{proof}
\noindent \textbf{Solving For m.}
\noindent  
Fix $0<\epsilon_2<\epsilon_1<1$, let $\textbf{X}$ be $n$ pairwise orthogonal vectors in $\mathbb{S}^{N-1}$, and let $P_{\textrm{1-1}}(m)$ be the probability that $\textbf{B}_m$ is one-to-one, then $1-\epsilon_1P_{\textrm{1-1}}(m)$ when: 
\begin{eqnarray*}
1-e^{-\frac{\binom{n}{2}}{2^{m}}}+\eta&\leq&\epsilon_1.\\
\end{eqnarray*}
and $P_{\textrm{1-1}}(m)<1-\epsilon_2$ when:
\begin{eqnarray*}
1-e^{-\frac{\binom{n}{2}}{2^{m}}}-\eta&\geq&\epsilon_2\\
\end{eqnarray*}
In order to ensure that $\eta$ is very small compared to the Poisson distribution, we want $\eta\leq0.01(1-e^{-\frac{\binom{n}{2}}{2^{m}}}).$ If we fix $n$ and choose $m$ such that $\frac{\binom{n}{2}}{2^{m}}\leq1$, observe the inequality   
$$
\frac{\frac{\binom{n}{2}}{2^{m}}}{2}\leq\frac{\binom{n}{2}}{2^{m}} -\frac{\frac{\binom{n}{2}^2}{2^{2m}}}{2}. 
$$
It is sufficient to bound $\eta$ as $$\eta\leq0.01\frac{\frac{\binom{n}{2}}{2^{m}}}{2}.$$
Manipulating this statement we get: $$\frac{\binom{n}{2}\cdot2^{-m}}{\binom{n}{2}}\leq0.005.$$ Since we already assumed that $\frac{\binom{n}{2}}{2^{m}}\leq1$, we can rewrite this inequality as $\frac{1}{\binom{n}{2}}\leq0.005$ and solve for $n$: $n\geq10.$ This means that if $n\geq10$, we have $\eta\leq0.01(1-e^{-\frac{\binom{n}{2}}{2^{m}}})$ which allows us to rewrite our inequalities and gain bounds on $m$:
$$\log_2\frac{n(n-1)}{2\ln\frac{1}{1-\frac{\epsilon_1}{1.01}}}\leq m
\leq\log_2\frac{n(n-1)}{2\ln\frac{1}{1-\frac{\epsilon_2}{0.99}}}.$$

\section{A Union Bound for the Restricted Isometry Property}  \label{s:JL}

This is the One Bit version of the Johnson-Lindenstrauss Lemma.  In particular the quantitive 
bound on $ m$ below is nearly identical to the best known bound in the linear Johnson-Lindenstrauss Lemma.

\begin{theorem}
Fix $0<\delta<\frac{1}{2}$ and $0<\epsilon<1$, let $\textbf{X}$ be $n$ pairwise orthogonal vectors in $\mathbb{S}^{N-1}$.  The random $m$ dimensional one-bit map, $\textbf{B}_m:\textbf{X}\to\mathbb{H}_m$, satisfies the $\delta$-RIP with probability at least $1-\epsilon$
when $$m\geq \frac{\ln\frac{n^2}{\epsilon}}{2\delta^2}.$$
\end{theorem}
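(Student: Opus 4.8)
The plan is to reduce the $\delta$-RIP to a uniform concentration statement for empirical averages of Bernoulli random variables, and then to close with a union bound. First I would fix a single pair $x\neq y$ in $\textbf{X}$ and recall from \eqref{e:Distance} that
$$
d_{\mathbb{H}_m}(\textbf{B}_m x, \textbf{B}_m y) = \frac{1}{m} \sum_{j=1}^m \mathbf 1_{\textup{Wedge}_{xy}}(\theta_j),
$$
which is exactly the empirical mean of $m$ independent Bernoulli random variables, each with success probability equal to $d_{\textup{geo}}(x,y)$ by the Crofton formula. Thus the left-hand side of \eqref{e:difference} is a centered empirical average, and the $\delta$-RIP for the pair $(x,y)$ is precisely the event that this average stays within $\delta$ of its mean.

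The key step is to control the deviation of this empirical average from its mean. Since each summand $\mathbf 1_{\textup{Wedge}_{xy}}(\theta_j)$ takes values in $[0,1]$ and the summands are independent, I would invoke Hoeffding's inequality, which gives
$$
\mathbb{P}\left( \left| d_{\mathbb{H}_m}(\textbf{B}_m x, \textbf{B}_m y) - d_{\textup{geo}}(x,y) \right| \geq \delta \right) \leq 2 e^{-2m\delta^2}.
$$
The crucial feature here is that the constant $2$ in the exponent is independent of the underlying probability $d_{\textup{geo}}(x,y)$; in particular the pairwise-orthogonality hypothesis (which forces $d_{\textup{geo}}(x,y)=\tfrac12$) is not actually needed for this bound, and it is precisely this sharp constant that will make the final threshold for $m$ match the linear J-L bound.

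Finally I would apply the union bound over all $\binom{n}{2}$ unordered pairs. Writing the failure event for the $\delta$-RIP as the union of the $\binom{n}{2}$ individual failure events and using the per-pair bound above, I get
$$
\mathbb{P}(\textbf{B}_m \textrm{ fails the } \delta\textrm{-RIP}) \leq \binom{n}{2} \cdot 2 e^{-2m\delta^2} \leq n^2 e^{-2m\delta^2},
$$
using $\binom{n}{2}\cdot 2 = n(n-1) \leq n^2$. Requiring the right-hand side to be at most $\epsilon$ and solving the inequality $n^2 e^{-2m\delta^2} \leq \epsilon$ for $m$ yields exactly $m \geq \frac{\ln(n^2/\epsilon)}{2\delta^2}$, as claimed.

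As for the main obstacle: there is no deep difficulty here, as the argument is a clean pairing of Hoeffding's inequality with the union bound. The only points that require care are verifying that the sharp sub-Gaussian constant for $[0,1]$-valued variables (the $2$ in the exponent) is what produces the advertised bound, and bookkeeping the factor of $2$ from the two-sided tail against the $\binom{n}{2}$ pairs so that it collapses cleanly to $n^2$.
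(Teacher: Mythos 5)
Your proposal is correct and follows essentially the same route as the paper: Hoeffding's inequality applied to the centered Bernoulli averages in \eqref{e:difference} gives the per-pair bound $2e^{-2m\delta^2}$, and the union bound over $\binom{n}{2}$ pairs combined with $2\binom{n}{2}\leq n^2$ yields the stated threshold for $m$. Your side remark that the orthogonality hypothesis is not actually used (only the uniformity of the Hoeffding constant in the Bernoulli parameter matters) is accurate and consistent with the more general claim made in the paper's introduction.
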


\noindent In the Restricted Isometry Property (RIP), we want to preserve the pairwise distances between the points so that for all pairs $x,y\in\textbf{X}$ $$|d_{\mathbb{H}_m}(\textbf{B}_m x,\textbf{B}_m y)-d_{geo}(x,y)|\leq\delta.$$ To ensure that we satisfy the $\delta$-RIP with probability at least $1-\epsilon$, we have to be certain that the probability of failure, $\epsilon$, is small:
$$\mathbb{P}( \exists  x,y\in\textbf{X} :|d_{\mathbb{H}_m}(\textbf{B}_m x,\textbf{B}_m y)-d_{geo}(x,y)|\geq\delta)\leq\epsilon.$$ 
Using the Union bound, 
\begin{eqnarray}
\mathbb{P}( \exists  x,y\in\textbf{X}:|d_{\mathbb{H}_m}(\textbf{B}_m x,\textbf{B}_m y)-d_{geo}(x,y)|\geq\delta)\leq\sum_{x\backsim y}\mathbb{P}(|d_{\mathbb{H}_m}(\textbf{B}_m x,\textbf{B}_m y)-d_{geo}(x,y)|\geq\delta).
\end{eqnarray}
We will first analyze the probability for one pair $x\backsim y$,  namely:
\begin{eqnarray}
\mathbb{P}(|d_{\mathbb{H}_m}(\textbf{B}_m x,\textbf{B}_m y)-d_{geo}(x,y)|\geq\delta).
\end{eqnarray}
\begin{lemma}
We claim that $\mathbb{P}(|d_{\mathbb{H}_m}(\textbf{B}_m x,\textbf{B}_m y)-d_{geo}(x,y)|\geq\delta)\leq 2e^{-2\delta^2m}$ for all pairs $x\backsim y.$
\end{lemma}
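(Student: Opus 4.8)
The plan is to recognize the left-hand quantity as the deviation of a sample mean of i.i.d. Bernoulli random variables from its expectation, and then to invoke Hoeffding's inequality, which is exactly the ``standard deviation inequality for Bernoulli rvs'' alluded to after \eqref{e:difference}.

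First I would use the representation \eqref{e:Distance} together with the Crofton-formula interpretation of geodesic distance to pin down the distribution of the summands. Since the $\theta_j$ are i.i.d.\ uniform on $\mathbb{S}^{N-1}$, the indicators $\mathbf 1_{{\textup{Wedge}}_{xy}}(\theta_j)$ for $1\leq j\leq m$ are i.i.d.\ Bernoulli random variables, each with success probability $p=\mathbb{P}(\theta\in{\textup{Wedge}}_{xy})=d_{geo}(x,y)$. In particular every summand takes values in $[0,1]$ and has mean $p$. By \eqref{e:difference}, the quantity inside the probability is precisely the centered sample average $\frac{1}{m}\sum_{j=1}^m\bigl(\mathbf 1_{{\textup{Wedge}}_{xy}}(\theta_j)-p\bigr)$.

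Next I would rewrite the event $\{|d_{\mathbb{H}_m}(\textbf{B}_m x,\textbf{B}_m y)-d_{geo}(x,y)|\geq\delta\}$ in the equivalent form $\bigl\{\bigl|\sum_{j=1}^m(\mathbf 1_{{\textup{Wedge}}_{xy}}(\theta_j)-p)\bigr|\geq m\delta\bigr\}$ and apply Hoeffding's inequality for sums of independent $[0,1]$-valued random variables. Each summand has range $1$, so the sum of squared ranges equals $m$; taking the threshold $t=m\delta$ gives the two-sided bound $2\exp\bigl(-2(m\delta)^2/m\bigr)=2e^{-2\delta^2 m}$, which is exactly the claimed estimate. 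Note this holds for arbitrary $p=d_{geo}(x,y)\in[0,1]$, so no orthogonality hypothesis is needed at the level of this lemma.

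The calculation is routine, and there is no serious obstacle; the only genuine decision is the choice of concentration inequality. Hoeffding's bound for $[0,1]$-bounded summands is what produces the clean exponent $2\delta^2 m$ together with the prefactor $2$ with no slack, whereas variance-sensitive alternatives such as Bernstein's inequality would introduce extra $p(1-p)$ terms and fail to yield this exact form. The factor of $2$ is simply the cost of combining the upper- and lower-tail estimates.
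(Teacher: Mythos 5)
Your proof is correct and is essentially the paper's own argument: both identify the difference of metrics as the centered sample mean of i.i.d.\ Bernoulli indicators with parameter $d_{geo}(x,y)$ and apply Hoeffding's inequality for bounded summands to get the uniform-in-$p$ bound $2e^{-2\delta^2 m}$. Your write-up just spells out the range/threshold bookkeeping that the paper leaves implicit.
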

\begin{proof}
The difference in the metrics is the average of the random variables: $$\textbf{X}_i(x,y) = \frac{|\textup{sgn}(x\cdot\theta_i)-\textup{sgn}(y\cdot\theta_i)|}{2}-\frac{\cos^{-1}(x\cdot y)}{\pi}.$$ These are independent centered Bernoulli random variables that satisfy a large deviation inequality which is uniform in the Bernoulli parameter \cite{hoeffding1963probability},
$$\mathbb{P}\left(\left|\frac{1}{m}\sum_{i=1}^{m}\textbf{X}_i(x,y)\right|>\delta\right)\leq2e^{-2\delta^2m}.$$
\end{proof}

\noindent\textbf{The Union Bound for the RIP:}
The last expression above provides a bound for the probability that one pair $x\backsim y$  fails the $\delta$-RIP. Now, summing over all pairs,$$\sum_{x\backsim y}\mathbb{P}\left(\left|\frac{1}{m}\sum_{i=1}^n\frac{|\textup{sgn}(x\cdot\theta_i)-\textup{sgn}(y\cdot\theta_i)|}{2}-\frac{\cos^{-1}(x\cdot y)}{\pi}\right|\geq\delta\right)\leq2\binom{n}{2}\cdot e^{-2\delta^2m}.$$
We can bound this probability with $\epsilon$ and solve for $m$:
\begin{eqnarray*}
m&\geq&\frac{\ln\frac{n^2}{\epsilon}}{2\delta^2}.
\end{eqnarray*} 
For all $m$ greater than this bound, $m$ must satisfy the $\delta$-RIP with probability at least $1-\epsilon.$

\section{A Phase Transition for the Restricted Isometry Property}  \label{s:PhaseRIP}
For special $ \textbf{X}$, we analyze the property of $ \textbf{B}_m$ is a $ \delta $-RIP. 
Again, the size of the window's dependence on $ \lvert  \textbf{X}\rvert $, is very weak. This time the dependence is 
in terms of $ \ln\ln\lvert  \textbf{X}\rvert $.  
\begin{theorem}
Fix $0<\delta<\frac{1}{2}$, fix $0<\epsilon_2<\epsilon_1<0.99$. Let $\bf{X}$ be $n$ pairwise orthogonal vectors in $\mathbb{S}^{N-1}$, and let $P_{\textrm{RIP}}(m)$ be the probability that $\textbf{B}_m$ satisfies $\delta-\textrm{RIP}$. If $n\geq800$, then $1-\epsilon_1<P_{\textrm{RIP}}(m)$ when: 
\begin{align} \label{e:mUpper} 
m \leq\frac{1}{\frac{1}{2}\ln(1-4\delta^2)+\delta\ln\frac{1+2\delta}{1-2\delta}}\left[\ln\frac{n(n-1)}{2\sqrt{2\pi}e^{\frac{1}{6}}\ln\frac{1}{1-\frac{\epsilon_1}{1.01}}}-\ln\ln\frac{n(n-1)}{2\sqrt{2\pi}e^{\frac{1}{6}}\ln\frac{1}{1-\frac{\epsilon_1}{1.01}}}\right]\\ 
\end{align}  
and $P_{\textrm{RIP}}(m)<1-\epsilon_2$ when 
\begin{align} \label{e:mLower}
m &\geq& \frac{1}{\frac{1}{2}\ln(1-4\delta^2)+\delta\ln\frac{1+2\delta}{1-2\delta}}\left[\ln\frac{n(n-1)e^{\frac{1}{12}}}{2\sqrt{2\pi}\ln\frac{1}{1-\frac{\epsilon_2}{0.99}}}+\ln\ln\frac{n(n-1)e^{\frac{1}{12}}}{2\sqrt{2\pi}\ln\frac{1}{1-\frac{\epsilon_2}{0.99}}}\right].
\end{align}
Additionally, the phase transition is bounded as follows:$$\mathbb{P}(\textbf{B}_m \textrm{ is a }\delta-\textrm{RIP})\in[e^{-\lambda_2^\delta}-\eta^\delta,e^{-\lambda_1^\delta}+\eta^\delta]$$ where $\lambda_1^\delta=\frac{n(n-1)}{2}\cdot\frac{e^{\frac{-1}{6}}}{\sqrt{2\pi m}}\cdot e^{m[\frac{-1}{2}\ln(1-4\delta^2)+\delta\ln\frac{1-2\delta}{1+2\delta}]}$, $\lambda_2^\delta=\frac{n(n-1)}{2}\cdot\frac{e^{\frac{1}{12}}\sqrt{m}}{\sqrt{2\pi}}\cdot e^{m[\frac{-1}{2}\ln(1-4\delta^2)+\delta\ln\frac{1-2\delta}{1+2\delta}]},$\\ and $\eta^\delta=\binom{n}{2}\left[(p^{\delta})^2+4(n-2)(p^{\delta})^2\right]$ where  $p^\delta\leq\frac{e^{\frac{1}{12}}\sqrt{m}}{\sqrt{2\pi}}\cdot e^{m[\frac{-1}{2}\ln(1-4\delta^2)+\delta\ln\frac{1-2\delta}{1+2\delta}]}.$
\end{theorem}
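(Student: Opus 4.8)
The plan is to reproduce the architecture of the one-to-one phase transition in \S\ref{s:Phase11}, with the single-pair collision event replaced by the single-pair RIP-failure event. For each unordered pair $x\backsim y$ set $W_{x\backsim y}=\mathbf 1\{|d_{\mathbb H_m}(\textbf{B}_m x,\textbf{B}_m y)-\tfrac12|\geq\delta\}$ and $W=\sum_{x\backsim y}W_{x\backsim y}$, so that $P_{\textrm{RIP}}(m)=\mathbb P(W=0)=1-\mathbb P(W\geq1)$. Since the vectors are pairwise orthogonal, $m\,d_{\mathbb H_m}(\textbf{B}_m x,\textbf{B}_m y)=\sum_j \mathbf 1_{\textup{Wedge}_{xy}}(\theta_j)$ is $\mathrm{Bin}(m,\tfrac12)$, so $p^\delta:=\mathbb P(W_{x\backsim y}=1)$ equals the two-sided tail $2\,\mathbb P(\mathrm{Bin}(m,\tfrac12)\geq m(\tfrac12+\delta))$, the same for every pair. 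Setting $\lambda=\mathbb E W=\binom n2 p^\delta$, I will show $P_{\textrm{RIP}}(m)=e^{-\lambda}+O(\eta^\delta)$ by a Stein--Chen estimate and then invert for $m$.

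First I would pin down $p^\delta$ up to polynomial-in-$m$ factors. The large-deviation exponent of this tail is $H(\tfrac12+\delta)-\ln2=-\tfrac12\ln(1-4\delta^2)+\delta\ln\frac{1-2\delta}{1+2\delta}$, exactly the exponent in $\lambda_1^\delta,\lambda_2^\delta$. To obtain matching prefactors I apply Stirling with Robbins' explicit bounds $\sqrt{2\pi k}\,k^k e^{-k}e^{1/(12k+1)}<k!<\sqrt{2\pi k}\,k^k e^{-k}e^{1/(12k)}$ to the boundary coefficient $\binom{m}{k}2^{-m}$ with $k\approx m(\tfrac12+\delta)$. Robbins' corrections then contribute $e^{-1/6}$ in the lower bound (the two denominator terms, each at most $\tfrac1{12}$) and $e^{1/12}$ in the upper bound (the single numerator term). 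For the lower estimate I keep only the largest term of the tail, giving the $1/\sqrt m$ in $\lambda_1^\delta$; for the upper estimate I bound the tail by its number of terms times its leading term, producing the weaker $\sqrt m$ in $\lambda_2^\delta$. This brackets $p^\delta$ between $\frac{e^{-1/6}}{\sqrt{2\pi m}}e^{m[\cdots]}$ and $\frac{e^{1/12}\sqrt m}{\sqrt{2\pi}}e^{m[\cdots]}$, hence $\lambda_1^\delta\le\lambda\le\lambda_2^\delta$.

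Next I would run the Stein--Chen bound in its general form. The key structural fact is that for pairwise orthogonal vectors the sign sequences $(\textup{sgn}(r\cdot\theta_j))_j$, $r\in\textbf X$, are mutually independent: the projection of the uniform measure onto any $k$-dimensional coordinate subspace is rotation invariant, so each of the $2^k$ orthants carries probability $2^{-k}$. Consequently $W_{x\backsim y}$ is a function of the two sequences for $x$ and $y$ only, so its dependency neighborhood $\mathscr N_{x\backsim y}$ is precisely the $2(n-2)$ pairs sharing a vertex with $\{x,y\}$, and every pair outside it is independent of $W_{x\backsim y}$. Moreover two pairs sharing a single vertex are themselves independent, since $\mathbf 1_{\textup{Wedge}_{xy}}(\theta_j)$ and $\mathbf 1_{\textup{Wedge}_{xz}}(\theta_j)$ are independent for each $j$ (an orthant computation), giving $\mathbb E[W_{x\backsim y}W_{x\backsim z}]=(p^\delta)^2$. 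Substituting $P_i=P_j=p^\delta$, $\mathbb E(X_iX_j)=(p^\delta)^2$, $|\mathscr N_i|=2(n-2)$ and $\min(1,1/\lambda)\le1$ into the general Stein--Chen bound yields exactly $\eta^\delta=\binom n2[(p^\delta)^2+4(n-2)(p^\delta)^2]$. Then $|P_{\textrm{RIP}}(m)-e^{-\lambda}|=|\mathbb P(W\geq1)-\mathbb P(\mathrm{Poi}(\lambda)\geq1)|\le\eta^\delta$, which together with $\lambda_1^\delta\le\lambda\le\lambda_2^\delta$ produces the stated window $[\,e^{-\lambda_2^\delta}-\eta^\delta,\ e^{-\lambda_1^\delta}+\eta^\delta\,]$.

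Finally I obtain \eqref{e:mUpper} and \eqref{e:mLower} by forcing each window endpoint past $1-\epsilon_i$. As in \S\ref{s:Phase11} I first make the Stein--Chen error negligible by demanding $\eta^\delta\le0.01\,(1-e^{-\lambda})$; it is the extra $4(n-2)$ dependency term that raises the threshold from $n\ge10$ to $n\ge800$. With $\eta^\delta$ absorbed into the $1.01$ and $0.99$ factors, the two endpoint conditions read $\lambda_2^\delta\le\ln\frac1{1-\epsilon_1/1.01}$ and $\lambda_1^\delta\ge\ln\frac1{1-\epsilon_2/0.99}$, which I solve by taking logarithms. The transcendental term $\pm\tfrac12\ln m$ coming from the $\sqrt m^{\pm1}$ prefactor is removed by one bootstrap step: substituting the leading-order solution $m\approx\frac1c\ln(\cdots)$ back into $\ln m$ turns it into the $\ln\ln(\cdots)$ correction inside the brackets, where $c=\tfrac12\ln(1-4\delta^2)+\delta\ln\frac{1+2\delta}{1-2\delta}$. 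The main obstacle is the two-sided control of $p^\delta$: the lower bound must retain enough of the binomial tail and the upper bound must sum it while keeping the Robbins constants explicit, because only \emph{matching} bounds, rather than a one-sided Chernoff estimate, give a genuine phase transition instead of a one-sided guarantee.
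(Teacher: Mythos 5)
Your proposal is correct and follows essentially the same route as the paper: the same Poissonization of the pair-failure count, the same Stirling/Robbins bracketing of the $\mathrm{Bin}(m,\tfrac12)$ tail (largest term below, $m$ times the largest term above, yielding the $e^{-1/6}/\sqrt m$ and $e^{1/12}\sqrt m$ prefactors), the same pairwise-independence argument via orthogonal coordinates to get $\mathbb E[W^\delta_{x\backsim y}W^\delta_{r\backsim s}]=(p^\delta)^2$ and $|\mathscr N_{x\backsim y}|\le 2(n-2)$, and the same $\eta^\delta\le 0.01(1-e^{-\lambda})$ device forcing $n\ge 800$ before bootstrapping the $\tfrac12\ln m$ term into the $\ln\ln$ correction. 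No substantive differences to report.
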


The graph below shows a simulation of the RIP property with $ \delta = 0.2$. 
The red line is the bound \eqref{e:mUpper}, the green line is \eqref{e:mLower}.  The jagged blue line is the simulated value of the probability of $ \textbf{B}_m$ being a $ 0.2$-RIP.  The line is jagged, due to the discrete nature of the Hamming metric. The latter fact is of course a complication implicit in our proof.
\begin{figure}[H]
\centering
\includegraphics[width=0.7\linewidth]{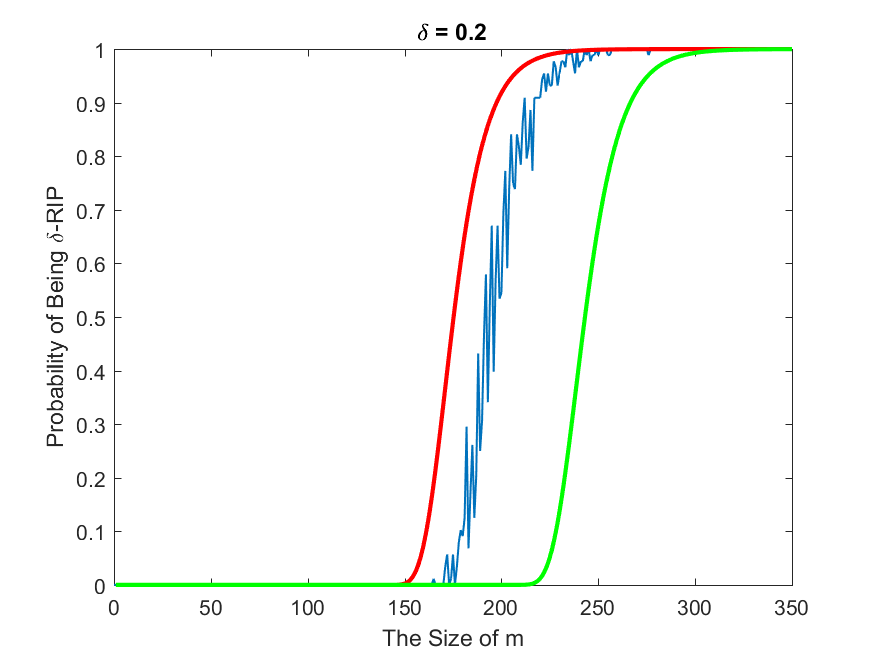}
\end{figure}

\noindent We will again analyze the phase transition from the perspective of the birthday problem. To do this we will count all $x\backsim y\in\textbf{X}$ that fail the RIP property, namely:$$\hspace{.05cm} W^\delta_{x\backsim y}= \begin{cases} 
1,& \textbf{B}_mx=\textbf{B}_my\\
0,& \textrm{otherwise} 
\end{cases}.$$  
Then $W^\delta =\displaystyle\sum_{x\backsim y}W^\delta_{x\backsim y}$ is a sum of Bernoulli random variables. All $W^\delta_{x\backsim y}$ are i.i.d with probability $p^\delta=\mathbb{P}(|d_{\mathbb{H}_m}(\textbf{B}_mx,\textbf{B}_my)-d_{geo}(x,y)|\geq\delta)$. 
By the general form of the Stein-Chen approximation $W^\delta$ is close to a Poisson distribution in total variation. We make this precise below:
$$d_{TV}(W^\delta, \textrm{Poi}(\lambda^\delta)) \leq \eta^\delta$$ where 
\begin{align}
\eta^\delta=min\left(1,\frac{1}{\lambda^\delta}\right)\sum_{x\backsim y}\left[(p^\delta_{x\backsim y})^2+\sum_{r\backsim s\in\mathscr{N}_{x\backsim y}}(p^\delta_{x\backsim y})^2+\mathbb{E}[W^\delta_{x\backsim y},W^\delta_{r\backsim s}]\right]\label{eq:2}\\
\end{align}
\begin{lemma}
We claim that $\lambda^\delta = \binom{n}{2}\mathbb{P}(|Y-\frac{m}{2}|>m\delta)$ where $Y$ is $\textrm{Bin}(m,\frac{1}{2})$.
\end{lemma}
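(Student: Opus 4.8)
The plan is to compute $\lambda^\delta = \mathbb{E}[W^\delta]$ directly and then rewrite the single-pair failure probability $p^\delta$ as a tail probability of a symmetric binomial. Since $W^\delta = \sum_{x\backsim y} W^\delta_{x\backsim y}$ is a finite sum of indicator variables, linearity of expectation gives $\lambda^\delta = \sum_{x\backsim y}\mathbb{E}[W^\delta_{x\backsim y}] = \binom{n}{2}p^\delta$, where the factor $\binom{n}{2}$ simply counts the unordered pairs in $\textbf{X}$ and $p^\delta = \mathbb{P}(|d_{\mathbb{H}_m}(\textbf{B}_m x,\textbf{B}_m y) - d_{geo}(x,y)| \geq \delta)$ is the same for every pair.

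The substance of the argument is identifying the distribution of $m\, d_{\mathbb{H}_m}(\textbf{B}_m x,\textbf{B}_m y)$. First I would invoke the representation \eqref{e:Distance}, which writes the Hamming distance as $\frac{1}{m}\sum_{i=1}^m \mathbf{1}_{\textup{Wedge}_{xy}}(\theta_i)$. By the Crofton-formula interpretation recalled in Section \ref{s:background}, each indicator $\mathbf{1}_{\textup{Wedge}_{xy}}(\theta_i)$ is a Bernoulli variable whose parameter equals $d_{geo}(x,y)$, and the $\theta_i$ are i.i.d. For the pairwise orthogonal set $\textbf{X}$ we have $d_{geo}(x,y) = \frac{1}{2}$ for every pair, so the count $Y := \sum_{i=1}^m \mathbf{1}_{\textup{Wedge}_{xy}}(\theta_i)$ is a sum of $m$ independent $\textup{Bernoulli}(\frac{1}{2})$ variables, that is, $Y \sim \textrm{Bin}(m,\frac{1}{2})$.

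With this identification the single-pair failure event translates cleanly. Since $d_{\mathbb{H}_m}(\textbf{B}_m x,\textbf{B}_m y) = Y/m$ and $d_{geo}(x,y) = \frac{1}{2}$, we obtain
$$p^\delta = \mathbb{P}\left(\left|\frac{Y}{m} - \frac{1}{2}\right| \geq \delta\right) = \mathbb{P}\left(\left|Y - \frac{m}{2}\right| \geq m\delta\right),$$
and multiplying by $\binom{n}{2}$ yields the claimed formula $\lambda^\delta = \binom{n}{2}\mathbb{P}(|Y - \frac{m}{2}| > m\delta)$.

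The only point requiring care, and hence the main obstacle such as it is, is the discreteness of $Y$: because $Y$ is integer-valued, the events $\{|Y - \frac{m}{2}| \geq m\delta\}$ and $\{|Y - \frac{m}{2}| > m\delta\}$ coincide unless $m\delta$ is exactly an attainable value of $|Y - \frac{m}{2}|$ (an integer when $m$ is even, a half-integer when $m$ is odd). I would resolve this either by fixing the convention that a pair fails the RIP precisely when the strict inequality holds, matching the statement of the lemma, or by noting that the two thresholds differ at most at isolated integer points and this does not affect the subsequent asymptotic estimates of $\lambda^\delta$. Beyond this bookkeeping there is no genuine difficulty: the result is essentially the definition of $\lambda^\delta$ combined with the binomial structure already built into the one-bit map.
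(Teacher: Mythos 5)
Your proposal is correct and follows essentially the same route as the paper: both reduce $\lambda^\delta$ to $\binom{n}{2}p^\delta$ by linearity, identify each coordinate indicator (your $\mathbf 1_{{\textup{Wedge}}_{xy}}(\theta_i)$ is the paper's $\tfrac{|\textup{sgn}(x\cdot\theta_i)-\textup{sgn}(y\cdot\theta_i)|}{2}$) as an independent Bernoulli$(\tfrac12)$ via the Crofton interpretation with $d_{geo}(x,y)=\tfrac12$, and conclude $Y\sim\textrm{Bin}(m,\tfrac12)$. Your remark on the strict versus non-strict inequality at the threshold $m\delta$ is a point of care the paper silently elides (its statement uses $>$ while its proof ends with $\geq$), and your resolution is fine.
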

\begin{proof}
We know that $\lambda^\delta=\binom{n}{2}p^\delta.$ In this special case, the geodesic distances between the points in $\bf{X}$ is $\frac{1}{2},$ which reduces $p^\delta$ to:
\begin{eqnarray*}
p^\delta&=&\mathbb{P}\left(\left|d_{\mathbb{H}_m}(\textbf{B}_mx,\textbf{B}_my)-\frac{1}{2}\right|\geq\delta\right)\\
&=&\mathbb{P}\left(\left|\sum_{i=1}^m\frac{|\textup{sgn}(x\cdot\theta_i)-\textup{sgn}(y\cdot\theta_i)|}{2}-\frac{m}{2}\right|\geq m\delta\right)\\
\end{eqnarray*}
For each $i$, $\frac{|\textup{sgn}(x\cdot\theta_i)-\textup{sgn}(y\cdot\theta_i)|}{2}$ is Bernoulli with parameter $d_{geo}(x,y)=\frac{1}{2}.$ The $\theta_i$ are independently so $Y=\sum_{i=1}^m\frac{|\textup{sgn}(x\cdot\theta_i)-\textup{sgn}(y\cdot\theta_i)|}{2}$ is $\textrm{Bin}(m,\frac{1}{2})$ and we can rewrite $p^\delta$ as: $$p^\delta=\mathbb{P}\left(\left|Y-\frac{m}{2}\right|\geq m\delta\right)$$ 
\end{proof}

\begin{lemma}  We can bound $\lambda^\delta$ as such:
$$\frac{e^{-\frac{1}{6}}}{\sqrt{m}} \leq \frac{\lambda^{\delta}}{\Lambda} \leq e^{\frac{1}{12}}\sqrt{m}$$ where $\Lambda=\frac{\binom{n}{2}}{\sqrt{2\pi}}\cdot e^{m[\frac{-1}{2}\ln(1-4\delta^2)+\delta\ln\frac{1-2\delta}{1+2\delta}]}$ Additionally for $\delta<0.25$, we can approximate this statement as:$$\binom{n}{2}\cdot\frac{e^{-\frac{1}{6}}}{\sqrt{2\pi m}}\cdot e^{\frac{-2m\delta^2 -4m\delta^3}{1-2\delta}} \hspace{.5cm} \leq \lambda^\delta \leq \binom{n}{2}\cdot\frac{e^{\frac{1}{12}}\sqrt{m}}{\sqrt{2\pi}}\cdot e^{\frac{-2m\delta^2 +8m\delta^3}{1-4\delta^2}}$$
\end{lemma}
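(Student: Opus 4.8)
The plan is to reduce everything to a tail estimate for a symmetric binomial and then apply Stirling's formula with explicit error control. By the previous lemma, $\lambda^\delta = \binom{n}{2}\,\mathbb{P}(|Y-\tfrac{m}{2}|\ge m\delta)$ with $Y\sim\mathrm{Bin}(m,\tfrac12)$, so it suffices to sandwich the single-pair probability $p^\delta = \mathbb{P}\bigl(|Y-\tfrac{m}{2}|\ge m\delta\bigr)$ between $\frac{e^{-1/6}}{\sqrt{2\pi m}}e^{mE}$ and $\frac{e^{1/12}\sqrt m}{\sqrt{2\pi}}e^{mE}$, where $E=-\tfrac12\ln(1-4\delta^2)+\delta\ln\frac{1-2\delta}{1+2\delta}$ is the exponent appearing in $\Lambda$. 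Since the $\mathrm{Bin}(m,\tfrac12)$ law is symmetric about $m/2$ and unimodal, I would first write $p^\delta = 2\sum_{j\ge m(\frac12+\delta)}\binom{m}{j}2^{-m}$ and isolate the dominant term at $j_0 = \lceil m(\tfrac12+\delta)\rceil$.

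Second, I would estimate that dominant term $\binom{m}{j_0}2^{-m}$ by Stirling's formula in the sharp form $n! = \sqrt{2\pi n}\,(n/e)^n e^{r_n}$ with $\frac{1}{12n+1}<r_n<\frac{1}{12n}$. Writing $a=\tfrac12+\delta$, $b=\tfrac12-\delta$, the powers collapse to $e^{m(-a\ln a-b\ln b)}2^{-m}$, and a short computation identifies the binary-entropy exponent with $E$ exactly:
$$ -a\ln a - b\ln b - \ln 2 = -\tfrac12\ln(1-4\delta^2)+\delta\ln\tfrac{1-2\delta}{1+2\delta}. $$
The square-root prefactors combine to $\frac{1}{\sqrt{2\pi m}\sqrt{1/4-\delta^2}}$, and the Stirling remainders combine to $\exp(r_m - r_{j_0} - r_{m-j_0})$, which lies in $(e^{-1/(12mab)},\,e^{1/12})$ because $r_m<\tfrac{1}{12m}$ while $r_{j_0},r_{m-j_0}>0$ and $\tfrac1{j_0}+\tfrac1{m-j_0}\le\tfrac1{mab}$.

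Third, for the two directions: the lower bound follows by keeping only the $j_0$ term, $p^\delta \ge 2\binom{m}{j_0}2^{-m}$, where the factor $\frac{1}{\sqrt{ab}}\ge 2$ and the $e^{-1/6}$ slack comfortably absorb the Stirling remainder and the rounding of $m(\tfrac12+\delta)$ up to $j_0$. For the upper bound I would dominate the one-sided tail by its largest term times the number of terms, equivalently by a geometric series of ratio $\frac{m-j}{j+1}\le\frac{1-2\delta}{1+2\delta}$, so that $p^\delta$ is at most a constant-in-$m$ multiple of $\binom{m}{j_0}2^{-m}$; the $\sqrt m$ factor in the target then absorbs this multiple together with the prefactor and the $e^{1/12}$ remainder. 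Finally, the approximation for $\delta<\tfrac14$ is obtained by Taylor/series bounds on $\ln(1\pm2\delta)$ and $\ln(1-4\delta^2)$, replacing $E$ by the rational-in-$\delta$ exponents $\frac{-2\delta^2-4\delta^3}{1-2\delta}$ (lower) and $\frac{-2\delta^2+8\delta^3}{1-4\delta^2}$ (upper).

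The main obstacle is the bookkeeping in the third step: the window between the two claimed bounds is only a factor of $e^{1/4}m$ wide, so all the polynomial prefactors ($\sqrt{1/4-\delta^2}$ and the tail-count factor of order $\tfrac1\delta$), the Stirling remainders, and the integer rounding of $m(\tfrac12+\delta)$ must be tracked simultaneously and shown to fit inside $[\frac{e^{-1/6}}{\sqrt m},\,e^{1/12}\sqrt m]$. In particular the upper bound is only comfortable once $m$ is a modest multiple of $1/\delta$, which is precisely the regime ($n\ge 800$, $m$ near the transition $\sim\frac{\ln n}{2\delta^2}$) in which the surrounding theorem is applied; confirming that the window is never violated there is the delicate part, whereas the entropy identity and the Taylor bounds are routine once the setup is in place.
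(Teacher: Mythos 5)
Your proposal follows essentially the same route as the paper: reduce $\lambda^\delta$ to the $\mathrm{Bin}(m,\tfrac12)$ tail, apply Stirling with explicit $e^{\pm 1/12}$-type remainders to the dominant term $\binom{m}{\lceil m(\frac12+\delta)\rceil}2^{-m}$, identify the binary-entropy exponent with the one in $\Lambda$, keep a single term for the lower bound and bound the one-sided tail by (largest term)$\times$(number of terms) for the upper bound, and finish the $\delta<\tfrac14$ approximation with $\frac{-x}{1-x}\leq\ln(1-x)\leq -x$. The extra bookkeeping you flag --- the $\frac{1}{\sqrt{1/4-\delta^2}}$ prefactor, the ceiling rounding, and the geometric-series refinement of the tail count --- is in fact warranted rather than optional, since the paper's own upper-bound line silently replaces $\sqrt{m/(2\pi A(m-A))}$ by $\frac{1}{\sqrt{2\pi m}}$, which discards exactly the factor of $1/\sqrt{1/4-\delta^2}\geq 2$ that your version tracks and absorbs.
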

\begin{proof}
As previously defined, 
\begin{align}
\lambda^\delta &= \binom{n}{2}\mathbb{P}(|Y-\frac{m}{2}|>m\delta)\\
&=2\binom{n}{2}\frac{1}{2^m}\left[\binom{m}{\lceil\frac{m}{2}+m\delta\rceil}+...+\binom{m}{m}\right]\label{eq:1}.
\end{align}

\noindent Now we will use Sterling's approximation, 
$$\sqrt{2\pi m}\left(\frac{m}{e}\right)^{m}\leq m!\leq\sqrt{2\pi m}\left(\frac{m}{e}\right)^{m}\cdot e^{\frac{1}{12m}},$$
to obtain bounds for $\lambda^{\delta}$, but for the upper bound, we will use the fact that $\displaystyle{e^{\frac{1}{12m}}\leq e^{\frac{1}{12}}}$ for $m\geq1$.\\

\noindent Let $A=\lceil\frac{m}{2}+m\delta\rceil$ and let us assess only the first term of the $\lambda^{\delta}$ sum since it is the largest.

\begin{align*}
e^{-\frac{1}{6}}\cdot\sqrt{\frac{m}{2\pi A(m-A)}}\cdot\frac{\left(\frac{m}{e}\right)^{m}}{\left(\frac{A}{e}\right)^{A}\left(\frac{m-A}{e}\right)^{m-A}}
&\leq
\binom{m}{A}
&\leq&
\sqrt{\frac{m}{2\pi A(m-A)}}\cdot\frac{\left(\frac{m}{e}\right)^{m}}{\left(\frac{A}{e}\right)^{A}\left(\frac{m-A}{e}\right)^{m-A}}\cdot e^{\frac{1}{12}}\\
\frac{\frac{e^{-\frac{1}{6}}m^m}{\sqrt{2\pi m}}}{\left(\frac{m^{2}}{4}-m^{2}\delta^{2}\right)^{\frac{m}{2}}\left(\frac{\frac{m}{2}+m\delta}{\frac{m}{2}-m\delta}\right)^{m\delta}}
&\leq 
\binom{m}{A} 
&\leq&
\frac{\frac{e^{\frac{1}{12}}m^m}{\sqrt{2\pi m}}}{\left(\frac{m^{2}}{4}-m^{2}\delta^{2}\right)^{\frac{m}{2}}\left(\frac{\frac{m}{2}+m\delta}{\frac{m}{2}-m\delta}\right)^{m\delta}}\\
\frac{\frac{e^{-\frac{1}{6}}}{\sqrt{2\pi m}}\left(1-2\delta\right)^{m\delta}}{\left(\frac{1}{4}-\delta^{2}\right)^{\frac{m}{2}}\left(1+2\delta\right)^{m\delta}}
&\leq
\binom{m}{A}
&\leq&
\frac{\frac{e^{\frac{1}{12}}}{\sqrt{2\pi m}}\left(1-2\delta\right)^{m\delta}}{\left(\frac{1}{4}-\delta^{2}\right)^{\frac{m}{2}}\left(1+2\delta\right)^{m\delta}}.
\end{align*}
We can rewrite these bounds in terms of $\Lambda$:
$$\frac{2^me^{-\frac{1}{6}}}{\binom{n}{2}\sqrt{m}} \leq \frac{\binom{m}{A}}{\Lambda} \leq \frac{2^me^{\frac{1}{12}}\sqrt{m}}{\binom{n}{2}}$$
Using this inequality for the first term in $\eqref{eq:1}$, we gain a lower bound for $\lambda^{\delta}$:  
$$\frac{\Lambda e^{-\frac{1}{6}}}{\sqrt{m}} \leq \lambda^{\delta}.$$
For the upper bound, we have at most $m$ summands in $\eqref{eq:1}$ so the upper bound is: $$\lambda^{\delta} \leq \Lambda e^{\frac{1}{12}}\sqrt{m}$$
\noindent For $\delta<0.25$ we can simply these two statements above using the Taylor approximation for  $\ln(1-x)$, $\frac{-x}{1-x} \leq \ln(1-x) \leq -x$

\begin{align*}
\binom{n}{2}\cdot\frac{e^{-\frac{1}{6}}}{\sqrt{2\pi m}}\cdot e^{\frac{-2m\delta^2 -4m\delta^3}{1-2\delta}} \hspace{.5cm} \leq \lambda^\delta \leq \binom{n}{2}\cdot\frac{e^{\frac{1}{12}}\sqrt{m}}{\sqrt{2\pi}}\cdot e^{\frac{-2m\delta^2 +8m\delta^3}{1-4\delta^2}}.
\end{align*}
\end{proof}

\begin{lemma}
We claim that: $\eta^\delta=\binom{n}{2}\left[(p^{\delta})^2+4(n-2)(p^{\delta})^2\right]$ where $\eta^\delta$ is given in $\eqref{eq:2}.$
Using this we can bound $1-\mathbb{P}(W^\delta\geq1)$ in the window $$[e^{-\lambda_2^\delta}-\eta^\delta,e^{-\lambda_1^\delta}+\eta^\delta]$$ where $\lambda_1^\delta=\binom{n}{2}\cdot\frac{e^{-\frac{1}{6}}}{\sqrt{2\pi m}}\cdot e^{m[\frac{-1}{2}\ln(1-4\delta^2)+\delta\ln\frac{1-2\delta}{1+2\delta}]}$ and $\lambda_2^\delta=\binom{n}{2}\cdot\frac{e^{\frac{1}{12}}\sqrt{m}}{\sqrt{2\pi}}\cdot e^{m[\frac{-1}{2}\ln(1-4\delta^2)+\delta\ln\frac{1-2\delta}{1+2\delta}]}.$\\
\end{lemma}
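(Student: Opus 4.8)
The plan is to evaluate the general Stein--Chen bound \eqref{eq:2} term by term, exploiting the fact that for pairwise orthogonal vectors the one-bit coordinates decouple. Since every geodesic distance equals $\tfrac12$, all the indicators $W^\delta_{x\backsim y}$ are Bernoulli with the common parameter $p^\delta$, so $p^\delta_{x\backsim y}=p^\delta$ throughout and the $\binom{n}{2}$ ``self'' terms $(p^\delta_{x\backsim y})^2$ contribute $\binom{n}{2}(p^\delta)^2$. It then remains to handle the double sum over the neighborhoods $\mathscr N_{x\backsim y}$ and to bound the prefactor $\min(1,1/\lambda^\delta)$ by $1$.

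Next I would pin down the dependency structure. Two pairs enter each other's neighborhood only when they share a vertex: for a fixed pair $x\backsim y$, replacing $x$ or $y$ by any of the remaining $n-2$ points produces exactly $2(n-2)$ such neighbors, and I would take $\mathscr N_{x\backsim y}$ to be precisely this set. The decisive step is to show that for a neighbor $y\backsim s$ sharing the vertex $y$ one has $\mathbb E[W^\delta_{x\backsim y}W^\delta_{y\backsim s}]=(p^\delta)^2$. I would prove this through the Gaussian model of the uniform measure: writing $\theta_i=g_i/\lvert g_i\rvert$ with $g_i$ standard Gaussian, the projections $x\cdot g_i,\ y\cdot g_i,\ s\cdot g_i$ are jointly Gaussian with diagonal covariance because $x,y,s$ are orthonormal, hence mutually independent. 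Consequently the coordinatewise wedge indicators $\mathbf 1_{{\textup{Wedge}}_{xy}}(\theta_i)$ and $\mathbf 1_{{\textup{Wedge}}_{ys}}(\theta_i)$ are independent (conditioning on $\textup{sgn}(y\cdot g_i)$ makes the signs of the $x$- and $s$-projections independent), so the counts $\sum_i\mathbf 1_{{\textup{Wedge}}_{xy}}(\theta_i)$ and $\sum_i\mathbf 1_{{\textup{Wedge}}_{ys}}(\theta_i)$ are independent, and therefore so are the RIP-failure indicators $W^\delta_{x\backsim y}$ and $W^\delta_{y\backsim s}$, each being a function of one of these counts. The same computation shows disjoint pairs are independent, justifying their exclusion from $\mathscr N_{x\backsim y}$.

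With this identity in hand, each of the $2(n-2)$ neighbor summands equals $p^\delta_{x\backsim y}p^\delta_{y\backsim s}+\mathbb E[W^\delta_{x\backsim y}W^\delta_{y\backsim s}]=2(p^\delta)^2$, so each pair contributes $4(n-2)(p^\delta)^2$ from its neighborhood and $(p^\delta)^2$ from its self term; summing over the $\binom{n}{2}$ pairs and using $\min(1,1/\lambda^\delta)\le1$ yields the claimed $\eta^\delta=\binom{n}{2}\big[(p^\delta)^2+4(n-2)(p^\delta)^2\big]$.

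For the window, I would invoke the Stein--Chen conclusion $d_{TV}(W^\delta,\textup{Poi}(\lambda^\delta))\le\eta^\delta$ evaluated at the single point $0$: since $\mathbb P(\textup{Poi}(\lambda^\delta)=0)=e^{-\lambda^\delta}$ and $P_{\textrm{RIP}}(m)=1-\mathbb P(W^\delta\ge1)=\mathbb P(W^\delta=0)$, this gives $\lvert P_{\textrm{RIP}}(m)-e^{-\lambda^\delta}\rvert\le\eta^\delta$. Finally I would insert the two-sided bound $\lambda_1^\delta\le\lambda^\delta\le\lambda_2^\delta$ from the previous lemma and use that $\lambda\mapsto e^{-\lambda}$ is decreasing, so that $e^{-\lambda_2^\delta}\le e^{-\lambda^\delta}\le e^{-\lambda_1^\delta}$ and hence $P_{\textrm{RIP}}(m)\in[e^{-\lambda_2^\delta}-\eta^\delta,\ e^{-\lambda_1^\delta}+\eta^\delta]$. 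The main obstacle is the independence claim $\mathbb E[W^\delta_{x\backsim y}W^\delta_{y\backsim s}]=(p^\delta)^2$ for vertex-sharing pairs; everything else is bookkeeping, and it is the Gaussian decoupling of orthogonal directions that makes this identity clean.
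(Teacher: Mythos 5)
Your proposal is correct and follows essentially the same route as the paper: evaluate the Stein--Chen bound term by term, count the $2(n-2)$ vertex-sharing neighbors, reduce everything to the identity $\mathbb{E}[W^\delta_{x\backsim y}W^\delta_{y\backsim s}]=(p^\delta)^2$, and then pass from $d_{TV}$ at the event $\{W^\delta=0\}$ to the window via monotonicity of $\lambda\mapsto e^{-\lambda}$ and the bounds $\lambda_1^\delta\leq\lambda^\delta\leq\lambda_2^\delta$. The only cosmetic difference is that you establish the pairwise independence in place via the Gaussian representation $\theta_i=g_i/\lvert g_i\rvert$, whereas the paper assumes it here and proves it in a separate subsequent lemma by an orthogonal transformation to coordinate vectors and conditioning on $\textbf{B}_m y$ --- the same symmetry fact in different clothing.
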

\begin{proof}
We recall: $$\eta^\delta= \min\left(1,\frac{1}{\lambda^\delta}\right)\sum_{x\backsim y}\left[(p^\delta)^2+\sum_{r\backsim s\in\mathscr{N}_{x\backsim y}}((p^\delta)^2+\mathbb{E}[W^\delta_{x\backsim y},W^\delta_{r\backsim s}])\right].$$ In order to estimate $\eta^\delta$, we need to estimate $|\mathscr{N}_{x\backsim{y}}|$. Because the only coordinates that are dependent on $x\backsim y$ are those that share exactly one coordinate with $x\backsim y$, $|\mathscr{N}_{x\backsim{y}}|\leq2(n-2)$. There are two ways this can happen: either $r$ or $s$ shares a coordinate with $x\backsim y.$ There are $n-2$ ways to choose the remaining coordinates. Assuming pairwise independence, we can estimate the size of $\eta^\delta$:

\begin{eqnarray*}
\eta^\delta\leq \binom{n}{2}\left[(p^{\delta})^2+4(n-2)(p^{\delta})^2\right].
\end{eqnarray*}
We can now bound $1-\mathbb{P}(W^\delta\geq1)$ which is equal to $\mathbb{P}(\textbf{B}_m \textrm{ is }\delta-\textrm{RIP})$. $$|\mathbb{P}(W^\delta\geq1)- \mathbb{P}(\textrm{Poi}(\lambda^\delta)\geq1)|<\eta^\delta$$ $$1-\mathbb{P}(W^\delta\geq1)\in[e^{-\lambda_2^\delta}-\eta^\delta,e^{-\lambda_1^\delta}+\eta^\delta]$$ where $\lambda_1^\delta=\binom{n}{2}\cdot\frac{e^{-\frac{1}{6}}}{\sqrt{2\pi m}}\cdot e^{m[\frac{-1}{2}\ln(1-4\delta^2)+\delta\ln\frac{1-2\delta}{1+2\delta}]}$ and $\lambda_2^\delta=\binom{n}{2}\cdot\frac{e^{\frac{1}{12}}\sqrt{m}}{\sqrt{2\pi}}\cdot e^{m[\frac{-1}{2}\ln(1-4\delta^2)+\delta\ln\frac{1-2\delta}{1+2\delta}]}.$

\end{proof}
\begin{lemma}
$W^{\delta}_{x\backsim y}$ are pairwise independent for all pairs, $x\backsim y$.
\end{lemma}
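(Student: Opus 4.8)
The plan is to deduce pairwise independence of the $W^\delta_{x\backsim y}$ from independence of the underlying wedge-indicator sequences, and then to reduce that to a single-coordinate symmetry computation. First I would note that, by \eqref{e:Distance}, each $W^\delta_{x\backsim y}$ is a measurable function of the sum $S_{xy}:=\sum_{i=1}^m\mathbf 1_{\textup{Wedge}_{xy}}(\theta_i)$, since the RIP-failure event $\lvert \tfrac1m S_{xy}-\tfrac12\rvert\geq\delta$, like the collision event $S_{xy}=0$, depends only on $S_{xy}$. So it suffices to fix two distinct pairs $x\backsim y$ and $r\backsim s$ and show that the vectors $(\mathbf 1_{\textup{Wedge}_{xy}}(\theta_i))_{i=1}^m$ and $(\mathbf 1_{\textup{Wedge}_{rs}}(\theta_i))_{i=1}^m$ are independent; functions of independent vectors are independent, so $W^\delta_{x\backsim y}\perp W^\delta_{r\backsim s}$ follows. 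Since the $\theta_i$ are i.i.d., the joint law of the paired coordinates factorizes over $i$, and the whole claim collapses to: for a single uniform $\theta\in\mathbb S^{N-1}$, the two indicators $\mathbf 1_{\textup{Wedge}_{xy}}(\theta)$ and $\mathbf 1_{\textup{Wedge}_{rs}}(\theta)$ are independent.

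The key symmetry I would exploit is that for orthonormal $v_1,\dots,v_k$ and uniform $\theta$, the signs $\textup{sgn}(v_1\cdot\theta),\dots,\textup{sgn}(v_k\cdot\theta)$ are i.i.d.\ uniform on $\{-1,+1\}$: for any pattern $\epsilon\in\{-1,+1\}^k$ the orthogonal reflection sending $v_j\mapsto\epsilon_j v_j$ preserves the distribution of $\theta$ while flipping exactly the chosen signs, and invariance under all $2^k$ such reflections forces the sign vector to be uniform. Because $\textbf{X}$ is pairwise orthogonal, the vectors in $x\backsim y$ and $r\backsim s$ are orthonormal, and I would split into two cases. If the pairs are disjoint, the indicators $\mathbf 1[a\neq b]$ and $\mathbf 1[c\neq d]$ (with $a,b,c,d$ the four signs) are functions of disjoint independent signs, hence independent. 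If they share one vector, say $x\backsim y$ and $y\backsim s$ with signs $a,b,c$, then $\mathbf 1[a\neq b]$ and $\mathbf 1[b\neq c]$ depend only on $a\oplus b$ and $b\oplus c$, which are independent fair bits since $a,b,c$ are i.i.d.\ uniform; a $2\times2$ table gives $\mathbb P(a\neq b,\,b\neq c)=\tfrac14$, settling the single-coordinate claim in every case.

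The reason the lemma asserts only \emph{pairwise} independence --- and the one point I would be careful about --- is the XOR obstruction that appears among three vectors: for $x,y,s$ the triple $\mathbf 1[a\neq b],\mathbf 1[b\neq c],\mathbf 1[a\neq c]$ satisfies $(a\oplus b)\oplus(b\oplus c)=a\oplus c$, so the third indicator is determined by the first two and the three are pairwise but not mutually independent. This is precisely why the Stein--Chen approximation, rather than an exact product formula, drives the phase-transition argument. The hard part is thus not analytic but structural bookkeeping: verifying that every comparison in the proof involves at most two pairs sharing at most one vector, so that the XOR relation never forces dependence, and confirming at the outset that $W^\delta_{x\backsim y}$ really is a function of $S_{xy}$ alone. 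Once those points are pinned down, the identity $\mathbb E[W^\delta_{x\backsim y}W^\delta_{r\backsim s}]=(p^\delta)^2$ used earlier in the section is justified.
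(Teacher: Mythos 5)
Your proof is correct, and it reaches the same conclusion as the paper by a genuinely different decomposition. The paper fixes the shared point $y$, conditions on its image $\textbf{B}_m y = k$ in the Hamming cube, and factorizes the triple probability as $\sum_{k\in\mathbb{H}_m}(p^\delta)^2\cdot 2^{-m}=(p^\delta)^2$, using the orthogonal change of coordinates sending $x,y,s$ to $e_1,e_2,e_3$ to justify independence of the three sign sequences; it thereby verifies only the product identity $\mathbb{E}[W^\delta_{x\backsim y}W^\delta_{r\backsim s}]=(p^\delta)^2$ (which, for a pair of indicator variables, is indeed equivalent to independence). You instead work coordinate-by-coordinate: the same orthogonal-invariance symmetry makes the signs $a,b,c$ i.i.d.\ uniform, the wedge indicators become the XORs $a\oplus b$ and $b\oplus c$, and a two-by-two table shows these are independent fair bits; lifting over the i.i.d.\ $\theta_i$ then gives full independence of the random vectors, hence of the counts $S_{xy}$ and $S_{rs}$, hence of any functions of them. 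Your route buys a strictly stronger intermediate statement (independence of the wedge-count processes, applicable to any event defined through $S_{xy}$, including the collision events of Section 4), avoids the sum over $2^m$ Hamming-cube points, and makes explicit via the relation $(a\oplus b)\oplus(b\oplus c)=a\oplus c$ why the family is only pairwise and not mutually independent --- a point the paper leaves implicit. One housekeeping remark: the displayed definition of $W^\delta_{x\backsim y}$ in the paper literally reads $\textbf{B}_m x=\textbf{B}_m y$ (an apparent carry-over from the one-to-one section), but you correctly read it as the RIP-failure event $\lvert d_{\mathbb{H}_m}(\textbf{B}_m x,\textbf{B}_m y)-\tfrac12\rvert\geq\delta$, which is what $p^\delta$ and the rest of the section require.
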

\begin{proof}
To show that $W^\delta_{x\backsim y}$ are pairwise independent, it is sufficient to show that $\mathbb{E}(W^\delta_{x\backsim y},W^\delta_{r\backsim s})=(p^\delta)^2:$  $$\mathbb{E}(W^\delta_{x\backsim y},W^\delta_{r\backsim s})= \mathbb{P}(W^\delta_{x\backsim y}=W^\delta_{r\backsim s}=1)=\mathbb{P}(W^\delta_{x\backsim y}=W^\delta_{y\backsim s}=1).$$ The only non-trivial case is when $x\backsim y$ and $r\backsim s$ share a common point. We can rewrite this probability as: $$\sum_{k\in\mathbb{H}_m}\mathbb{P}\left(\left|d_{\mathbb{H}_m}(\textbf{B}_mx,k)-\tfrac{1}{2}\right|\geq \delta, \left|d_{\mathbb{H}_m}(\textbf{B}_ms,k)-\tfrac{1}{2}\right|\geq\delta, \textbf{B}_my=k\right)$$ where k is an element of $\mathbb{H}_m.$ After an orthogonal transformation, we can take $x,y,s$ to be the first three coordinates vectors $e_1,e_2,e_3.$ The distribution of the $\theta_j$ are unchanged. The signs of the coordinates of the $\theta_j$ are independent, so the events are independent. Because all of these events are independent, the probability can be written as: 
\begin{eqnarray*}
\sum_{k\in\mathbb{H}_m}\mathbb{P}\left(\left|d_{\mathbb{H}_m}(\textbf{B}_mx,k)-\frac{1}{2}\right|\geq\delta\right)\mathbb{P}\left(\left|d_{\mathbb{H}_m}(\textbf{B}_ms,k)-\frac{1}{2}\right|\geq\delta\right)\mathbb{P}(\textbf{B}_my=k)\\ =\sum_{k\in\mathbb{H}_m}(p^\delta)^2\cdot2^{-m}.
\end{eqnarray*}
Because each  of these probabilities is identical distributed to $\mathbb{P}\left(\left|d_{\mathbb{H}_m}(\textbf{B}_mx,\textbf{B}_my)-\frac{1}{2}\right|\geq\delta\right).$ There are are $2^m$ elements in $\mathbb{H}_m$, we get that $\mathbb{P}(W^\delta_{x\backsim y}=W^\delta_{r\backsim s}=1) = (p^\delta)^2$, as desired.
\end{proof}
\noindent \textbf{Solving For $\bf{m}$}:
Using the previous bounds on $\lambda^\delta$, $$\lambda_1^\delta<\lambda^\delta<\lambda_2^\delta$$ where $\lambda_1^\delta=\binom{n}{2}\cdot\frac{e^{-\frac{1}{6}}}{\sqrt{2\pi m}}\cdot e^{m[\frac{-1}{2}\ln(1-4\delta^2)+\delta\ln\frac{1-2\delta}{1+2\delta}]}$ and $\lambda_2^\delta=\binom{n}{2}\cdot\frac{e^{\frac{1}{12}}\sqrt{m}}{\sqrt{2\pi}}\cdot e^{m[\frac{-1}{2}\ln(1-4\delta^2)+\delta\ln\frac{1-2\delta}{1+2\delta}]}.$
Fix $0<\epsilon_2<\epsilon_1<1$, let $\textbf{X}$ be $n$ pairwise orthogonal vectors in $\mathbb{S}^{N-1}$, and let $P_{\textrm{RIP}}(m)$ be the probability that $\textbf{B}_m$ is one-to-one, then $1-\epsilon_1<P_{RIP}(m)$ when:
\begin{eqnarray*}
1-e^{-\lambda_1^\delta}+\eta^\delta&\leq&\epsilon_1.\\
\end{eqnarray*} 
and $P_{\textrm{RIP}(m)}\leq1-\epsilon_2$ when 
\begin{eqnarray*}
1-e^{-\lambda_2^\delta}-\eta^\delta&\geq&\epsilon_2.\\
\end{eqnarray*}
In order to ensure that $\eta^\delta$ is very small compared to $1-e^{-\lambda_2^\delta}$ and $1-e^{-\lambda_1^\delta}$, we want $\eta^\delta\leq0.01(1-e^{-\lambda_2^\delta}).$ If we fix $n$ and choose $m$ such that $\lambda_2^\delta\leq1,$ then using the inequality $\frac{\lambda_2^\delta}{2}\leq\lambda_2^\delta -\frac{(\lambda_2^\delta)^2}{2}$ it is sufficient to bound $\eta^\delta$ as $$\eta^\delta\leq0.01\frac{\lambda_2^\delta}{2}.$$
Manipulating this statement we gain: $$(1+2(n-2))\cdot\frac{e^{\frac{1}{12}}\sqrt{m}}{\sqrt{2\pi}}\cdot  e^{m[\frac{-1}{2}\ln(1-4\delta^2)+\delta\ln\frac{1-2\delta}{1+2\delta}]}\leq0.005.$$ Because $1+2(n-2)\leq4\frac{n(n-1)}{2}$, we can rewrtite this inequality as: $$\frac{4\lambda_2^\delta}{n}\leq0.005.$$ Since we assumed that $\lambda_2^\delta\leq1$, we can rewrite this inequality as $\frac{4}{n}\leq0.005$ and solve for $n$: $$n\geq800.$$ This means that if $n\geq800$, we have $\eta^\delta\leq0.01(1-e^{-\lambda_2^\delta})$ and $\eta^\delta\leq0.01(1-e^{-\lambda_1^\delta})$ which allows us to rewrite our inequalities and get bounds on $m$:

\begin{eqnarray*}
1.01(1-e^{-\lambda_1^\delta})&\leq&\epsilon_1\\
\frac{1}{2}\ln m + m\left(\frac{1}{2}\ln(1-4\delta^2)+\delta\ln\frac{1+2\delta}{1-2\delta}\right) &\leq& \ln\frac{n(n-1)}{2\sqrt{2\pi}e^{\frac{1}{6}}\ln\frac{1}{1-\frac{\epsilon_1}{1.01}}}\\  
\end{eqnarray*} 
\begin{eqnarray*}
m \leq \frac{1}{\frac{1}{2}\ln(1-4\delta^2)+\delta\ln\frac{1+2\delta}{1-2\delta}}\left[\ln\frac{n(n-1)}{2\sqrt{2\pi}e^{\frac{1}{6}}\ln\frac{1}{1-\frac{\epsilon_1}{1.01}}}-\ln\ln\frac{n(n-1)}{2\sqrt{2\pi}e^{\frac{1}{6}}\ln\frac{1}{1-\frac{\epsilon_1}{1.01}}}\right]
\end{eqnarray*}
and

\begin{eqnarray*}
0.99(1-e^{-\lambda_2^\delta}) &\geq& \epsilon_2\\
m\left(\frac{1}{2}\ln(1-4\delta^2)+\delta\ln\frac{1+2\delta}{1-2\delta}\right) - \frac{1}{2}\ln m &\geq& \ln\frac{n(n-1)e^{\frac{1}{12}}}{2\sqrt{2\pi}\ln\frac{1}{1-\frac{\epsilon_2}{0.99}}}\\
\end{eqnarray*}
\begin{eqnarray*}
m \geq \frac{1}{\frac{1}{2}\ln(1-4\delta^2)+\delta\ln\frac{1+2\delta}{1-2\delta}}\left[\ln\frac{n(n-1)e^{\frac{1}{12}}}{2\sqrt{2\pi}\ln\frac{1}{1-\frac{\epsilon_2}{0.99}}}+\ln\ln\frac{n(n-1)e^{\frac{1}{12}}}{2\sqrt{2\pi}\ln\frac{1}{1-\frac{\epsilon_2}{0.99}}}\right].
\end{eqnarray*}

Let $q=\frac{1}{\frac{1}{2}\ln(1-4\delta^2)+\delta\ln\frac{1+2\delta}{1-2\delta}}.$ We remark that $q$ is approximately $\frac{1}{2\delta^2}.$
Let $r=\ln\frac{n(n-1)}{2\sqrt{2\pi}} $ then:
$$m\leq q\left[r+\ln\frac{1}{e^{\frac{1}{6}}\ln\frac{1}{1-\frac{\epsilon_1}{1.01}}} -\ln r-\ln\ln\frac{1}{e^{\frac{1}{6}}\ln\frac{1}{1-\frac{\epsilon_1}{1.01}}}\right]$$
and
$$m\geq q\left[r+\ln\frac{e^{\frac{1}{12}}}{\ln\frac{1}{1-\frac{\epsilon_2}{0.99}}} +\ln r+\ln\ln\frac{e^{\frac{1}{12}}}{\ln\frac{1}{1-\frac{\epsilon_2}{0.99}}}\right]$$
This is a statement of the main theorem by inspection.

\section{Acknowledgments} 
We would like to thank Dr. Michael Lacey and Dr. Robert Kesler for their assistance and mentorship. We would also like to thank the Georgia Institute of Technology and the NSF for their funding and support.

\bibliographystyle{alpha,amsplain}

\begin{bibdiv}
\begin{biblist}

\bib{2016arXiv161000239A}{article}{
      author={{Alon}, N.},
      author={{Klartag}, B.},
       title={{Optimal compression of approximate inner products and dimension
  reduction}},
        date={2016-10},
     journal={ArXiv e-prints},
      eprint={1610.00239},
}

\bib{arratia1990poisson}{article}{
      author={Arratia, Richard},
      author={Goldstein, Larry},
      author={Gordon, Louis},
       title={Poisson approximation and the chen-stein method},
        date={1990},
     journal={Statistical Science},
       pages={403\ndash 424},
}

\bib{binev2012compressed}{incollection}{
      author={Binev, Peter},
      author={Dahmen, Wolfgang},
      author={DeVore, Ronald},
      author={Lamby, Philipp},
      author={Savu, Daniel},
      author={Sharpley, Robert},
       title={Compressed sensing and electron microscopy},
        date={2012},
   booktitle={Modeling nanoscale imaging in electron microscopy},
   publisher={Springer},
       pages={73\ndash 126},
}

\bib{dasgupta1999elementary}{article}{
      author={Dasgupta, Sanjoy},
      author={Gupta, Anupam},
       title={An elementary proof of the johnson-lindenstrauss lemma},
        date={1999},
     journal={International Computer Science Institute, Technical Report},
       pages={99\ndash 006},
}

\bib{fornasier2011compressive}{incollection}{
      author={Fornasier, Massimo},
      author={Rauhut, Holger},
       title={Compressive sensing},
        date={2011},
   booktitle={Handbook of mathematical methods in imaging},
   publisher={Springer},
       pages={187\ndash 228},
}

\bib{hoeffding1963probability}{article}{
      author={Hoeffding, Wassily},
       title={Probability inequalities for sums of bounded random variables},
        date={1963},
     journal={Journal of the American statistical association},
      volume={58},
      number={301},
       pages={13\ndash 30},
}

\bib{johnson1984extensions}{article}{
      author={Johnson, William~B},
      author={Lindenstrauss, Joram},
       title={Extensions of lipschitz mappings into a hilbert space},
        date={1984},
     journal={Contemporary mathematics},
      volume={26},
      number={189-206},
       pages={1},
}

\bib{lustig2007sparse}{article}{
      author={Lustig, Michael},
      author={Donoho, David},
      author={Pauly, John~M},
       title={Sparse mri: The application of compressed sensing for rapid mr
  imaging},
        date={2007},
     journal={Magnetic resonance in medicine},
      volume={58},
      number={6},
       pages={1182\ndash 1195},
}

\end{biblist}
\end{bibdiv}

\end{document}